\newtheorem{theorem}{Theorem}[section]
\newtheorem{lemma}{Lemma}[section]
\newtheorem{corollary}{Corollary}[section]
\theoremstyle{plain}
\title{Lyapunov exponents and regularity of invariant foliations for partially hyperbolic diffeomophisms on $\mathbb T^3$.}
\author{R. Var\~ao} 
\date{}
\begin{document}
\maketitle

\begin{abstract}

We briefly survey some of the recent results concerning the metric behavior of the invariant foliations for a partially hyperbolic on a three-dimensional manifold and propose a conjecture to characterize atomic behavior for conservative partially hyperbolic homotopic to Anosov (DA) on $\mathbb T^3$. On the other hand we prove that if one of the invariant foliations (stable, center or unstable) of a conservative DA on $\mathbb T^3$ is $C^1$ and transversely absolutely continuous with bounded Jacobians the Lyapunov exponent on this direction is defined everywhere and constant. If the center foliation is this foliaion then the DA diffeomophism is smoothly conjugated to a linear Anosov, in particular Anosov. Another consequence of the main theorem is that it does not exist a conservative Ma\~n\'e's example.
\end{abstract}

\section{Introduction}


A diffeomophism $f:M \rightarrow M$ on a compact manifold is called partially hyperbolic if the tangent bundle of $M$ admits a $Df$-invariant splitting $TM=E^s \oplus E^c \oplus E^u$ such that for any three  $x, y, z \in M$
$$
 \|D_xf v^s \| < \|D_yf v^c\| < \| D_zf v^u\|
$$ 
where $v^s, v^c$ and $v^u$ are unit vectors belonging respectively to $E_x^s, E^c_y$ and $E^u_z.$ An Anosov diffeomorphism is one for which it is possible to split the tangent space solely in an stable and unstable directions.

The subbundles $E^s$ and $E^u$ are the stable and unstable direction. These two subbundles integrate into invariant foliations, known as the stable $\mathcal F^s$ and unstable foliation $\mathcal F^u$ (see \cite{bonatti.diaz.viana} and references therein). The existence of a center foliation (that is an invariant foliation tangent to the $E^c$ direction) does not always exist. But for many cases it does exist, in particular all known transitive partially hyperbolic diffeomophism on a three-dimensional manifold $M^3$ admits a center foliation (\cite{bonatti.wilkinson} for a discussion). One of such diffeomophisms are the Derived from Anosov (DA) on $\mathbb T^3$. We say that $f: \mathbb T^n \rightarrow \mathbb T^n$ is a DA diffeomophism if $f$ is partially hyperbolic and homotopic to a linear Anosov $A:\mathbb T^n \rightarrow \mathbb T^n$, that is $A \in SL(n,\mathbb Z)$ and $A$ is a hyperbolic matrix. $A$ is called the linearization of $f$ and $f$ is semi-conjugated to $A$ (\cite{sambarino.hiperbolicidad.estabilidad} for some properties of the semi-conjugacy).

It is long known that the stable and unstable foliations are absolutely continuous \cite{bonatti.diaz.viana}. That is, given a set $A \subset M$ of full volume, then for volume almost every point the leaf $\mathcal F(x)$ of the point $x \in M$  intersects the set $A$ in full leaf volume. It turns out that for the center foliation such a behavior is not the expected one. Ruelle and Wilkinson \cite{RW.singular} gave a robust example of a volume preserving partially hyperbolic diffeomophism on $\mathbb T^3$ with compact center leaves such that there exist a set of full volume $A$ for which for almost every point $x$ the center leaf $\mathcal F^c(x)$ intersects $A$ in finite points.

Such behavior also occurs for $DA$ diffeomomorphism.
The first example were given by \cite{PTV}, which also gives an open condition for the atomic behavior. All these results as well as the study on the disintegration by \cite{varao} lead us to the following conjecture:

\vspace{0,3cm}
\textbf{Conjecture:} Let $f:\mathbb T^3 \rightarrow \mathbb T^3$ be a volume preserving DA diffeomophisms, then the center foliation has atomic disintegration if and only if $\lambda_A^c \lambda^c_f < 0$, where $\lambda_f$ is the center Lyapunov exponent defined almost everywhere and $\lambda^c_A$ is the center Lyapunov exponent for $A$.
\vspace{0,2cm}

It is only known three types of transitive partially hyperbolic diffeomophisms on a three dimensional manifold, $DA$ is one of those three types, for the other two the center foliation has in fact a much more understandable behavior, see \cite{AVW, varao}.

Now that we have seen that the center foliation often is (in the volume preserving case) non-absolutely continuous, one might expect that the absolute continuity of the center foliation should impose a rigid condition on the diffeomophism. Avila, Viana, Wilkinson \cite{AVW} have shown that for the perturbations of the time one of the geodesic flow on a closed negative curved curvature surface, if the center foliation is absolutely continuous, then the center foliation is a $C^1$ foliation and the perturbation is in fact the time one of some Anosov flow. The continuation of their work, that is on ``part II" similar results are obtained for large classes of partially hyperbolic with compact center leaves (skew-product). The natural analogous of such a result for DA diffeomorphisms should be that absolute continuity, implies smooth conjugacy with its linearizaton. But even if the center foliation is $C^1$ it fails to be smoothtly conjugate to its linearization \cite{varao}. Although the $C^1$ condition is not enough for the rigidity, if you also add the tranversely absolute continuity of the center foliation you get smooth conjugacy (see Corollary \ref{cor:rigidity}).

We say that the foliation $\mathcal F$ is \textit{transversely absolutely continuous with bounded Jacobians} if:  given an angle $\theta \in(0,\pi/2]$,
there exists $K\geq 1$ such that, for any two
transversals $\tau_1, \tau_2$ to
$ \mathcal{F}$ of angle at least $\theta$ with the leaves of $\mathcal F$, and any
measurable set $A \subset \tau_1$ then
\begin{eqnarray*}
 K^{-1} m_{\tau_1}(A) \leq m_{\tau_2}(h^{\mathcal F}_{1,2}(A))\leq K m_{\tau_1}(A),
\end{eqnarray*}
where $m_{\tau_i}$ is the Lebesgue measure on the transversal $\tau_i$ and $h^{\mathcal F}_{1,2}$ is the $\mathcal F$-holonomy from $\tau_1$ to $\tau_2$.

We then analyse which implication a smoothness plus transversely continuous foliation implies for the dynamics:

\begin{theorem}\label{theo:main}
Let $f:\mathbb T^3 \rightarrow \mathbb T^3$ be a DA, conservative diffeomorphism, such that there exists $* \in \{ s, c, u \}$ for which $\mathcal F_f^*$ is $C^1$ and transversely absolutely continuous foliation, then the Lyapunov exponent in the $*$-direction is defined for every point and it is equal to $\lambda_A^*$, where $\lambda_A^*$ is the Lyapunov exponent of the Linearization of $A$.
\end{theorem}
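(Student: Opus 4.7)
The plan is to extract the Lyapunov exponent from three ingredients: continuity of $\varphi^*(x):=\log\|Df|_{E^*(x)}\|$ coming from the $C^1$-ness of $\mathcal F^*$, a leafwise rigidity coming from the bounded-Jacobian hypothesis combined with conservativeness of $f$, and the semiconjugacy $h\circ f=A\circ h$ on the universal cover.

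Since $E^*$ is continuous, so is $\varphi^*$, and Birkhoff's theorem gives a.e.\ existence of $\lambda^*(x)=\lim_n n^{-1}\log\|Df^n|_{E^*(x)}\|$. To propagate this to every point and to a constant value, I would run a commutative-square argument: for $x,y$ on the same $\mathcal F^*$-leaf, choose small transversals $\tau_x,\tau_y$ with tangent space equal to the continuous $Df$-invariant complement $E^\perp$ to $E^*$, so that $f^n(\tau_x),f^n(\tau_y)$ remain transverse to $\mathcal F^*$ with uniformly bounded angle. Because $f$ preserves $\mathcal F^*$, the holonomies $h^{\mathcal F^*}_{x,y}$ and $h^{\mathcal F^*}_{f^n x,f^n y}$ intertwine with $f^n$ on the two vertical sides of the square. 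The horizontal Jacobians are in $[K^{-1},K]$ by hypothesis, while the two vertical Jacobians equal $|\det Df^n|_{T\tau}|$, which by conservativeness and the bounded angle between $E^*$ and $E^\perp$ is comparable to $\|Df^n|_{E^*(\cdot)}\|^{-1}$. Multiplying around the loop yields
$$
C^{-1}\le \frac{\|Df^n|_{E^*(x)}\|}{\|Df^n|_{E^*(y)}\|}\le C
$$
with $C$ independent of $n,x,y$. Dividing by $n$, the Birkhoff average exists and takes a common value at all points of a given leaf simultaneously; the Oseledets set meets every leaf in a full-leaf-measure subset (by transversal absolute continuity and Fubini), so $\lambda^*$ is in fact defined at every point. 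Constancy across leaves then follows from minimality of $\mathcal F^*$ when $*\in\{s,u\}$, and for $*=c$ from iterating the same square argument across the $s$ and $u$ foliations using accessibility of DA systems on $\mathbb T^3$.

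To identify the common value $\lambda_0^*$ with $\lambda_A^*$, I would use that the lift $\tilde h$ of the semiconjugacy has bounded displacement and sends each $\mathcal F^*_f$-leaf onto an affine $\mathcal F^*_A$-leaf. A leaf segment $L$ through $x$ of arc length $\ell$ satisfies $\mathrm{length}(f^n L)\sim e^{n\lambda_0^*}\ell$ by the leafwise constancy of the Jacobian, while $A^n(\tilde h L)$ has Euclidean length $e^{n\lambda_A^*}\mathrm{length}(\tilde h L)$. Bounded displacement of $\tilde h$ pins the ambient diameters of $f^n L$ and $A^n \tilde h L$ together up to $O(1)$; at the same time, long $\mathcal F^*_f$-leaves in the universal cover stay in a bounded tubular neighborhood of the corresponding affine leaf, so arc length and ambient diameter of $f^n L$ are asymptotically proportional. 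Equating the two exponential growth rates gives $\lambda_0^*=\lambda_A^*$. The main obstacle is precisely this last identification in the center case: a priori $\tilde h$ could collapse intervals of $\mathcal F^c_f$-leaves to points, destroying the length comparison, and ruling this out is where the combined $C^1$ plus transversely absolutely continuous hypothesis must really be used---probably via a volume comparison in foliation boxes rather than via the arc-length heuristic above.
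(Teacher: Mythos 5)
Your two main ingredients are the right ones, and your ``commutative square'' is essentially a pointwise repackaging of the paper's Lemma \ref{lemma:conditional.step.k}: conservativeness plus the bounded holonomy Jacobians of the $C^1$ foliation force $\|Df^n|_{E^*(x)}\|/\|Df^n|_{E^*(y)}\|\in[C^{-1},C]$ for $x,y$ on the same leaf, uniformly in $n$. Likewise your semiconjugacy/length-growth comparison is the content of the paper's Lemma \ref{lemma:growth.rate} (bounded displacement of $H$ plus quasi-isometry of the lifted leaves give $l(f^n[0,q_0])\asymp\lambda^n$). The problem is how you assemble them: you split the conclusion into ``the exponent is constant'' and ``the constant equals $\lambda_A^*$,'' and both halves are left with genuine holes. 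First, passing from ``constant on each leaf that meets the Oseledets set'' to ``defined and constant everywhere'' via minimality of $\mathcal F^{s,u}$ or accessibility does not work as stated: the function $x\mapsto\limsup_n \frac1n\log\|Df^n|_{E^*(x)}\|$ is merely measurable and leafwise constant, and density of leaves transfers nothing without some continuity or an ergodicity hypothesis you do not have. Second, you explicitly concede that the identification $\lambda_0^*=\lambda_A^*$ in the center case --- which is the actual content of the theorem --- is not established, because $H$ might collapse center arcs.

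Both gaps disappear if you multiply your two estimates together instead of treating them as separate steps, which is what the paper does. Integrating $\|Df^n|_{E^c}\|$ over a fixed leaf segment $[0,q_0]$ gives $l(f^n[0,q_0])$; your square argument says the integrand is comparable to its value at any single point $x$ of the segment, and the Lemma \ref{lemma:growth.rate}--type estimate says the integral is comparable to $\lambda^n$; hence $\|Df^n|_{E^*(x)}\|\in[C^{-1}\lambda^n,C\lambda^n]$ at \emph{every} point $x$ and for every $n$, which yields existence everywhere, constancy, and the value $\log\lambda_A^*$ in one stroke, with no appeal to Birkhoff, Oseledets, minimality or accessibility. (The paper phrases this by normalizing leaf measure as $m_{x,n}=\frac{\lambda^n}{l(\mathcal F[0,q_n])}m_{\mathcal F(x)}$ and squeezing the Radon--Nikodym derivative of $f^n_*m_x$, but it is the same computation.) Note that even in this form the lower bound on $l(f^n[0,q_0])$ in the center case still requires knowing that $H$ does not collapse the segment, i.e.\ $\|H(x)-H(y)\|\neq 0$; that point deserves care in any write-up, but it is a difficulty for the statement itself, not one created by your route.
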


From the proof we can say exactly the value of the Lyapunov exponent
\begin{corollary}
 $\lambda^*_f = \lambda^*_A$.
\end{corollary}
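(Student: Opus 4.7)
The corollary is an immediate consequence of Theorem~\ref{theo:main}: the theorem already asserts that the $*$-direction Lyapunov exponent of $f$ equals $\lambda_A^*$ at every point. My plan is therefore simply to translate that everywhere-statement into the numerical identity $\lambda^*_f = \lambda^*_A$ as it appears in the paper's notation.

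Two short observations do the job. First, in the paper's convention (as used, for instance, in the Conjecture, where $\lambda_f$ is called ``the center Lyapunov exponent defined almost everywhere'') the symbol $\lambda^*_f$ denotes the Oseledets exponent, i.e.\ the almost-everywhere value of $\lim_{n\to\infty}\tfrac{1}{n}\log\|Df^n|_{E^*(x)}\|$ under the invariant volume. Second, Theorem~\ref{theo:main} promotes this limit from almost-everywhere existence to everywhere existence and pins down its value pointwise as the constant $\lambda_A^*$. Restricting to the full-measure Oseledets set and reading off the common value immediately gives $\lambda^*_f = \lambda^*_A$; no averaging or ergodicity argument beyond what is already absorbed in the theorem is required.

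The real content of the corollary, as signalled by the author's preface ``From the proof we can say exactly the value of the Lyapunov exponent'', is informational rather than logical: it records that the argument of Theorem~\ref{theo:main} outputs the explicit number $\lambda_A^*$, not just an abstract constant produced by constancy of a Birkhoff average. In other words, one only needs to inspect the proof of the theorem to confirm that the identification of the common value as the corresponding eigenvalue exponent of the linearization is produced directly from the semiconjugacy $h\circ f = A\circ h$ together with the $C^1$ and bounded-Jacobian hypotheses. There is no genuine obstacle at the level of this corollary; all of the work has already been done in Theorem~\ref{theo:main}, and the role of the corollary is to isolate the numerical identity for later reference (in particular, for Corollary~\ref{cor:rigidity} and for the comparison with $\lambda^c_A$ appearing in the Conjecture).
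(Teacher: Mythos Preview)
Your reading is correct and matches the paper: the corollary is given no separate proof there, being presented as an immediate by-product of Theorem~\ref{theo:main} (and its proof), which already concludes that the $*$-Lyapunov exponent exists everywhere and equals $\lambda_A^*$. Your observation that the corollary's role is to record the explicit numerical identity produced by the argument, rather than to add any new step, is exactly the intended interpretation.
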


Where the linearization is seen as a partially hyperbolic.

\begin{corollary}
 It is not possible to construct a volume preserving Ma\~ne's example. 
\end{corollary}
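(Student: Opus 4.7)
The argument I would give is by contradiction. Suppose $f:\mathbb{T}^3\to\mathbb{T}^3$ is a conservative Mañé example with linearization $A\in SL(3,\mathbb{Z})$. Recall that Mañé's construction begins with a hyperbolic $A$ having three real eigenvalues $0<\nu_s<1<\nu_c<\nu_u$ and perturbs $A$ locally near a fixed point $p$ to produce a partially hyperbolic but non-Anosov $f$; the perturbation is designed so that the center eigenvalue of $D_pf$ has modulus strictly less than one, which means $\lambda_f^c(p)<0<\log\nu_c=\lambda_A^c$.

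The first step is to verify that the one-dimensional foliations $\mathcal{F}^s_f$ and $\mathcal{F}^u_f$ meet the hypotheses of Theorem \ref{theo:main}. Transverse absolute continuity with bounded Jacobians is the classical statement for stable and unstable foliations of a partially hyperbolic diffeomorphism, and $C^1$-regularity of these one-dimensional strong foliations holds under the standard bunching inequalities, which are preserved in a $C^2$-neighborhood of the linear Anosov $A$ (and thus for Mañé's perturbation). I would then apply Theorem \ref{theo:main} to $*=s$ and to $*=u$ separately, obtaining
\[
\lambda_f^s(x)=\lambda_A^s\qquad\text{and}\qquad\lambda_f^u(x)=\lambda_A^u\qquad\text{for every }x\in\mathbb{T}^3.
\]

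Next, I would specialize this to the fixed point $p$. At $p$ the Lyapunov exponents are simply the logarithms of the absolute values of the eigenvalues of $D_pf$, so $\lambda_f^c(p)$ is well defined. Since $f$ is conservative, $|\det D_pf|=1$, and since $A\in SL(3,\mathbb{Z})$ is also volume-preserving, $\lambda_A^s+\lambda_A^c+\lambda_A^u=0$. Combining the above,
\[
\lambda_f^c(p)\;=\;-\lambda_f^s(p)-\lambda_f^u(p)\;=\;-\lambda_A^s-\lambda_A^u\;=\;\lambda_A^c\;>\;0,
\]
which contradicts $\lambda_f^c(p)<0$.

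The main obstacle I anticipate is the verification, in the first step, of the $C^1$ regularity of $\mathcal{F}^s_f$ and $\mathcal{F}^u_f$: absolute continuity with bounded Jacobians is well documented for invariant foliations of partially hyperbolic systems, but $C^1$-smoothness of the one-dimensional strong stable and strong unstable foliations in a three-manifold depends on a bunching condition on the Lyapunov exponents, and this must be checked in a form that actually covers all conservative Mañé examples rather than just small perturbations of the linearization. Once this regularity is in hand, the rest of the argument is a direct one-line computation at the fixed point.
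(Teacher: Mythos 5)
There is a genuine gap, and it sits exactly where you flagged it: the verification that $\mathcal F^s_f$ and $\mathcal F^u_f$ satisfy the hypotheses of Theorem \ref{theo:main}. Transverse absolute continuity with bounded Jacobians is indeed classical for the strong stable and unstable foliations, but $C^1$-regularity of these foliations (transversally, as foliations, not just smoothness of individual leaves) is \emph{not} a standard fact: in dimension three the distributions $E^s$ and $E^u$ are in general only H\"older continuous, and upgrading to $C^1$ requires bunching inequalities on the contraction/expansion rates. Moreover, your fallback --- that such bunching persists in a $C^2$-neighborhood of $A$ --- does not apply here, because Ma\~n\'e's example is \emph{not} a small perturbation of $A$: the construction forces the center eigenvalue at the fixed point $p$ to cross $1$, which is a deformation of definite size in the $C^1$ topology (only the support is small). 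So the first step of your argument is not merely unproven; there is no reason to expect it to hold for an arbitrary conservative Ma\~n\'e example, and an argument that only rules out such examples under an extra bunching hypothesis does not prove the corollary.

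The paper's route avoids this entirely by exploiting a structural feature of the construction you did not use: Ma\~n\'e's deformation preserves the center foliation of the linear Anosov, so $\mathcal F^c_f=\mathcal F^c_A$ is a foliation by parallel straight lines. That foliation is smooth and transversely absolutely continuous with bounded Jacobians \emph{for free}, so Theorem \ref{theo:main} applies with $*=c$ and yields that the center Lyapunov exponent is defined everywhere and constant. The contradiction is then the same one you reach at $p$, phrased slightly differently: the construction produces periodic points with distinct center exponents (the deformed fixed point has negative center exponent while others retain $\lambda^c_A>0$). If you want to salvage your version, replace the application of the theorem to $*=s$ and $*=u$ by the application to $*=c$ with the observation $\mathcal F^c_f=\mathcal F^c_A$; your volume-preservation bookkeeping at $p$ then becomes unnecessary.
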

\begin{proof}
 The Ma\~n\'e's example (\cite{bonatti.diaz.viana} and references therein) is a deformations of a linear Anosov with three real distinct eigenvalues which is not an Anosov diffeomophism, robustly transitive and preserves the center foliation of the Linear Anosov. If it were volume preserving since $\mathcal F^c_f=\mathcal F^c_A$ it is $C^1$ and transversely absolutely continuous. Hence it should have the center Lyapunov exponent defined everywhere and constant, but there are two periodic points with distinct Lyapunov exponents. Absurd.
\end{proof}

The next corollary has already been proved on \cite{varao}, but we state and prove it here to illustrate the type of application of Theorem \ref{theo:main}. It is also important for the subsequent discussion.

\begin{corollary}\label{cor:rigidity}
 Let $f$ be as above, if the center foliation is $C^1$ and transversely absolutely continuous, then $f$ is $C^1$ conjugated to a linear Anosov, in particular $f$ is Anosov.
\end{corollary}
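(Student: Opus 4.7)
The plan is to deduce from Theorem \ref{theo:main} that the center direction is actually uniformly hyperbolic, so that $f$ is already Anosov, and then to invoke Franks--Manning together with the given regularity of the foliations to promote the resulting topological conjugacy to a $C^1$ one.

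The key step is the first one. Theorem \ref{theo:main} gives $\lambda^c_f(x) = \lambda^c_A$ for every $x \in \mathbb{T}^3$. The $C^1$ hypothesis on $\mathcal{F}^c_f$ makes the bundle $E^c$ continuous, hence the observable $\psi(x) := \log\|Df|_{E^c(x)}\|$ is continuous; the conclusion of the theorem is thus that the Birkhoff averages $\tfrac{1}{n}\sum_{k=0}^{n-1}\psi(f^k x)$ converge pointwise to the constant $\lambda^c_A$. Integrating against any $f$-invariant probability measure $\mu$ forces $\int \psi\, d\mu = \lambda^c_A$, so $\psi$ has the same integral against every invariant measure, and a standard application of the uniform ergodic theorem upgrades the convergence of the Birkhoff averages to uniform convergence in $x$. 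Since $\lambda^c_A \neq 0$ (the linearization is hyperbolic), one obtains uniform expansion or contraction of $Df^n|_{E^c}$. Combined with partial hyperbolicity, this yields that $f$ is an Anosov diffeomorphism.

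Once $f$ is Anosov and homotopic to the linear Anosov $A$, the Franks--Manning theorem provides a homeomorphism $h: \mathbb{T}^3 \to \mathbb{T}^3$ with $h \circ f = A \circ h$, which necessarily coincides with the semi-conjugacy and carries $\mathcal{F}^*_f$ onto $\mathcal{F}^*_A$ for each $* \in \{s, c, u\}$. To conclude $C^1$ conjugacy I would verify that $h$ is $C^1$ along each invariant foliation separately and then assemble the three regularities via Journ\'e's lemma: along each center leaf $h$ is a topological conjugacy between two $C^1$ hyperbolic one-dimensional dynamics whose derivatives at periodic points coincide (by the equality of Lyapunov exponents at periodic orbits established above), hence is $C^1$ by one-dimensional rigidity. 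The stable and unstable cases are handled analogously, using the smoothness of the respective leaves.

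The main obstacle is the passage from the pointwise statement of Theorem \ref{theo:main} to uniform hyperbolicity; this is exactly where the continuity of $\psi$, and hence the $C^1$ regularity of $\mathcal{F}^c_f$, is essential. Everything beyond this is a repackaging of Franks--Manning with classical regularity-along-foliations arguments.
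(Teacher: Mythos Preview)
Your argument that $f$ is Anosov is correct and is essentially the paper's one-line claim (``center Lyapunov exponent constant and defined everywhere, hence Anosov''), spelled out in more detail. One small remark: continuity of $E^c$ holds for any partially hyperbolic diffeomorphism, so the $C^1$ hypothesis on $\mathcal{F}^c_f$ is not needed at that point.

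The gap is in the second half. To run a Livšic/Journé argument and upgrade the Franks--Manning conjugacy to $C^1$, you need the periodic data of $f$ and $A$ to match in \emph{all three} one-dimensional bundles, not just the center. Theorem \ref{theo:main} only gives $\lambda^c_f(p)=\lambda^c_A$ at periodic points; volume preservation then yields $\lambda^s_f(p)+\lambda^u_f(p)=-\lambda^c_A=\lambda^s_A+\lambda^u_A$, but this does \emph{not} determine $\lambda^s_f(p)$ and $\lambda^u_f(p)$ separately. Your sentence ``the stable and unstable cases are handled analogously'' hides exactly this missing ingredient: nothing in your hypotheses says the stable or unstable foliation is $C^1$ and transversely absolutely continuous, so you cannot re-apply Theorem \ref{theo:main} in those directions.

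This is precisely where the paper's proof diverges from yours. After obtaining that $f$ is Anosov, the paper invokes Gogolev \cite{gogolev-non.abs.cont}: absolute continuity of the intermediate (center) foliation of a $3$-dimensional Anosov forces one of the extreme Lyapunov exponents to be constant on periodic orbits; volume preservation then pins down the remaining one. With all three periodic exponents constant (hence equal to those of $A$), the paper cites \cite{varao} for the $C^1$ conjugacy --- which is exactly the Livšic-plus-Journé mechanism you sketch, carried out there in detail. So your overall strategy for the upgrade is the right one, but it cannot be completed without an external input of Gogolev type to control the stable and unstable periodic data.
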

\begin{proof}
 The center Lyapunov exponent is contant and defined everywhere, hence $f$ is an Anosov diffeomophism. By \cite{gogolev-non.abs.cont} the stable or untable Lyapunov exponent is contant on periodic points, since $f$ is volume preserving all the Lyapunov exponent on each direction is constant on periodic points, by \cite{varao} this implies that $f$ is $C^1$ conjugated to its linearization.
\end{proof}

The rigidity result from Corollary \ref{cor:rigidity} does not extend directly into higher dimension:

\vspace{0.3cm}
\textbf{Example:} Let $A: \mathbb T^4 \rightarrow \mathbb T^4$, $A \in A \in SL(4,\mathbb Z)$ a hyperbolic matrix with eigenvalues $\lambda_1 < \lambda_2 \leq \lambda_3 < \lambda_4$, with $\lambda_1 < 1$ and $\lambda_4 > 1$ these are the stable and unstable directions respectively and the sum of the eigenspaces of the eigenvalues from $\lambda_2$ and $\lambda_3$ form the center direction. Let us consider a perturbation $A\circ h$ of $A$, where $h:\mathbb T^4 \rightarrow \mathbb T^4$ is a volume preserving diffeomophism defined as follows, $h$ is the identity map outside some small ball of a fixed point $ p \in \mathbb T^4$, $h$ is volume preserving and preserves the $E^c_A$ direction, then by Baraviera-Bonnati \cite{baraviera.bonatti} $\int \lambda^c_{A\circ h} < \int \lambda^c_A$, hence $A$ and $A\circ h$ are not $C^1$ conjugate, but $\mathcal F^{c}_{A\circ h}$ is a $C^1$ foliation transversely absolutely continuous with bounded Jacobians, because $\mathcal F^{c}_{A\circ h} = \mathcal F^{c}_{A}$
\vspace{0.3cm}

Although the rigidity result of Corollary \ref{cor:rigidity} is not valid as stated for higher dimension, Theorem \ref{theo:main} shed some light on what type of rigidity result to look for. One should as well use the conjugacy result for higher dimensional Anosov diffeomophism from \cite{Gog.Kali.Sadov.Llave}.

\vspace{0.3cm}
\textbf{Question:} Suppose $f: \mathbb T^4 \rightarrow \mathbb T^4$ is a conservative DA diffeomorphism with two dimensional center foliation, if the center foliation as well as an invariant subfoliation of the center foliation are $C^1$ and transversely absolutely continuous with bounded Jacobian, do we get rigidity (smooth conjugacy with a linear Anosov)?

\section{Proof of Theorem \ref{theo:main}}

From now on we work on the universal cover $\mathbb R^3$, we abuse notation and still use $f$ for the lift, as well as $\mathcal F^*$ for the lifted invariant foliation. Also we suppose the linearization has two unstable directions, therefore the center leaf for the linearization is expanding.

Volume will be denoted as $m$ and $m_{\mathcal F(x)}$ will be the Lebesgue measure on the leaf $\mathcal F_f^*(x)$. For convenience we shall suppose $* = c$, that is $\mathcal F_f^* = \mathcal F_f^c$, and whenever there is a difference in the argument we point it out and work the case.

Consider $B$ to be the invariant topological plane $\mathcal W^{su}_f(0)$ and $0 \in \mathbb R^3$ is a fixed point for $f$, where $\mathcal W^{su}_f(0)$ is the stable leaf of $0$ foliated by unstable leaves. We see $B$ as the ``base", by \cite{andy.hammerlindl-thesis.article} all center leaves of $f$ intersect $B$ at a unique point. It also happens for the other cases, for instance $\mathcal W^{sc}(0)$ intersects every unstable leaf in one point. As well as $\mathcal W^{uc}(0)$ intersects every stable leaf in one point \cite{andy.hammerlindl-thesis.article}.

Now, consider $B_0$ to be the set of $x \in \mathbb R^3$ which distance inside $\mathcal F^c$ to $B$ is greater or equal to $L\in \mathbb R_+$, where $L$ is much bigger than the distance of the semi-conjugacy $H$ to the identity. This gives that the iterations $f^n(B_0)$ gets further and further from $B$. 

Now consider $C_n$ to be the union of the center leaves from the base $B$ to the iterate $B_n=f^n(B_0)$ and $\pi_n : C_n \rightarrow B$, the projection through the center leaf. For the time being we shall suppose that $B$ and $B_n$ are $C^1$ two-dimensional manifolds. 

Given a transversal foliation $\mathcal T$, we denote $\mathcal T(x,\epsilon)$ as the ball inside the leaf $\mathcal T(x)$ centered at $x$ and radius $\epsilon$. Then, since $\mathcal F$ is transversely absolutely continuous with bounded Jacobians each slice $\mathcal T(x,\epsilon)$ has volume on the leaf comparable to the volume of the base $B(x,\epsilon) \subset B$. Therefore: 
 \begin{eqnarray}\label{eq:compare.measure}
m(B^{\mathcal T}(I,\epsilon)) =  m_B (B(x, \epsilon)) [K^{-1},K] m_{\mathcal F(x)}(I)   
 \end{eqnarray}

where $B^{\mathcal T}(I,\epsilon) = \bigcup_{z \in I}\mathcal T(z,\epsilon)$. And we use the notation of $a=b[C^{-1},C]$ for $  C^{-1}b\leq  a\leq C b$.

Another important notation is the following. We call $B$ the base also because each point on $B$ is seen as on height zero. That is, given $x \in \mathbb R^3$, the meaning of $[0,q_n] \subset \mathcal F^c(x)$ is the segment between the point $B \cap \mathcal F^c(x)$ and $B_n \cap \mathcal F^c(x)$, where these two points represent in this case $0$ and $q_n$ respectively. That is $0 = 0(x)$, $q_n=q_n(x)$ but we omit the reference to $x$ as it is always clear which $x$ is being considered.

Let us now define the following measure

\begin{eqnarray}\label{eq:m_x,n}
 m_{x,n}= \frac{\lambda^n}{l(\mathcal F[0,q_n])} m_{\mathcal F(x)},
\end{eqnarray}
where $\lambda$ is the eigenvalue of $A$ associated to the direction $E^c$ and $l(\mathcal F[0,q_n])$ is the length inside $\mathcal F^c(x)$.

\begin{lemma}\label{lemma:conditional.step.k}
There exist $K_0 \in \mathbb R_+$, such that for $I \subset \mathcal F(f^n(x))\cap C_k$ measurable
 $$f^n_* m_{x,k}(I) = \lambda^{-n} [K_0^{-1},K_0] m_{f(x),k+1}(I)$$
\end{lemma}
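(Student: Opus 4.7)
The strategy is (i) unpack both sides of the identity using the definition (\ref{eq:m_x,n}), (ii) turn the resulting comparison into a ratio of integrals of the center Jacobian $J^c f:=|Df|_{E^c}|$ by applying the one-dimensional change of variables along $\mathcal F^c$, and (iii) close by observing that on the compact torus the continuous function $J^c f$ is pinched between two positive constants. I read the statement in its natural one-step form ($n=1$); the general case follows by iterating, since the constant produced below is uniform in $x$ and $k$.

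Using (\ref{eq:m_x,n}) I write
$$f_*m_{x,k}(I)=\frac{\lambda^k}{l(\mathcal F[0,q_k]_x)}\,m_{\mathcal F(x)}(f^{-1}(I)),\qquad m_{f(x),k+1}(I)=\frac{\lambda^{k+1}}{l(\mathcal F[0,q_{k+1}]_{f(x)})}\,m_{\mathcal F(f(x))}(I).$$
Because $0$ is fixed so $f(B)=B$, and $f(B_k)=B_{k+1}$ by construction, the center map $f:\mathcal F^c(x)\to\mathcal F^c(f(x))$ carries $[0,q_k]_x$ bijectively onto $[0,q_{k+1}]_{f(x)}$, and since $I\subset C_k$ its preimage satisfies $f^{-1}(I)\subset[0,q_{k-1}]_x\subset[0,q_k]_x$. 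Forming $f_*m_{x,k}(I)/(\lambda^{-1}m_{f(x),k+1}(I))$, the powers of $\lambda$ cancel and the claim reduces to
$$R \;:=\; \frac{l(\mathcal F[0,q_{k+1}]_{f(x)})\,m_{\mathcal F(x)}(f^{-1}(I))}{l(\mathcal F[0,q_k]_x)\,m_{\mathcal F(f(x))}(I)}\;\in\;[K_0^{-1},K_0].$$

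Now I use the $C^1$ hypothesis on $\mathcal F^c$ to apply the change of variables along $\mathcal F^c(x)$:
$$l(\mathcal F[0,q_{k+1}]_{f(x)})=\int_{[0,q_k]_x}J^c f\,dm_{\mathcal F(x)},\qquad m_{\mathcal F(f(x))}(I)=\int_{f^{-1}(I)}J^c f\,dm_{\mathcal F(x)}.$$
Dividing the numerator of $R$ by $l([0,q_k]_x)$ and the denominator by $m_{\mathcal F(x)}(f^{-1}(I))$, the quantity $R$ becomes the ratio of the average of $J^c f$ over $[0,q_k]_x$ to its average over $f^{-1}(I)$. Since $f\in C^1$ and $E^c$ is continuous (partial hyperbolicity), $J^c f$ is continuous on the compact $\mathbb T^3$, hence pinched by constants $0<c\le J^c f\le C<\infty$ (and this persists on the cover by periodicity). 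Each average lies in $[c,C]$, so $R\in[c/C,C/c]$ and one can take $K_0:=C/c$.

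The single substantive input is the change of variables on a center leaf, which is exactly what the $C^1$-ness of $\mathcal F^c$ provides; notably, the transverse absolute continuity (invoked elsewhere via (\ref{eq:compare.measure})) plays no role here, since the statement is entirely internal to one center leaf. No distortion lemma is needed because the two sets over which we average both sit inside $[0,q_k]_x$ and we only compare integrals of a function bounded between two positive constants. The main interpretive hurdle is the apparent mismatch between $f^n_*,\lambda^{-n}$ and the single shift $k\mapsto k+1$ on the right-hand side; reading these as the one-step case $n=1$ and iterating via $f^{n+1}_*=f_*\circ f^n_*$ yields any $n$ with the same uniform $K_0$.
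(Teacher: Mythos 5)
Your one-step computation is correct, and for $n=1$ it is arguably cleaner than what the paper does: you reduce the claim to a quotient of two averages of the center Jacobian $J^cf$ over subsets of $[0,q_k]_x$, and $J^cf$ is indeed pinched between positive constants by compactness. The paper takes a different route: it compares $f_*m_{\mathcal F(x)}$ with $m_{\mathcal F(f(x))}$ by a ``base times height'' argument, using that $f$ preserves volume to get the exact identity $m(f(\pi_n^{-1}(B(x,\epsilon))))=m(\pi_n^{-1}(B(x,\epsilon)))$, then expressing each tube volume as (base measure)$\times$(leaf measure) via the transverse absolute continuity with bounded Jacobians, and cancelling the comparable bases.

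The genuine gap is your final step, the passage from $n=1$ to general $n$. Iterating the one-step estimate via $f^{n+1}_*=f_*\circ f^n_*$ multiplies the multiplicative errors and produces the constant $K_0^n$, not $K_0$; the claim that iteration ``yields any $n$ with the same uniform $K_0$'' is false. If you instead run your argument directly for $f^n$, the ratio becomes a quotient of averages of $J^cf^n=\prod_{i=0}^{n-1}J^cf\circ f^i$, which is not uniformly pinched in $n$: controlling its oscillation over the segment $[0,q_k]_x$ (whose length grows like $\lambda^k$) would require a bounded distortion estimate along center leaves, and no such estimate is available in the center direction, where there is neither uniform contraction nor H\"older control to sum the errors. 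Uniformity of $K_0$ in $n$ is not cosmetic here: the lemma feeds into the relation $f^n_*m_x=\lambda^{-n}[K^{-1},K]m_{f^n(x)}$, and the final computation applies $\lim_{n}\frac1n\log$ to both sides, so a constant of size $K_0^n$ would contribute $\log K_0\neq0$ in the limit and destroy the conclusion $\lambda^*(x)=\log\lambda$. This is exactly where the hypotheses you set aside do the work in the paper's proof: volume preservation gives an identity that is exact for every $n$, and the transverse-absolute-continuity constants do not depend on $n$, which is how the paper obtains a $K_0$ uniform in $n$. Your remark that transverse absolute continuity ``plays no role'' in this lemma therefore misidentifies the function of that hypothesis.
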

\begin{proof}
Notice that if $f_x m_{\mathcal F(x)} = m_{\mathcal F(f(x))}$ than we would obtain the following equality $f_* m_{x,k} = \lambda m_{f(x),k+1}$, because these two measures would be a multiple of each other and therefore to check that they are the same we only need to check that they coincide in a given set. But, from their definition we have $ f_*m_{x,k}([0,q_{k+1}])= m_{x,k}([0,q_{k}]) = \lambda^k$ and $\lambda^{-1} m_{f(x),k+1}([0,q_{k+1}]) = \lambda^{-1}\lambda^{k+1}= \lambda^k$.

Although we do not have $f_x m_{\mathcal F(x)} = m_{\mathcal F(f(x))}$, let us see that $f_x m_{\mathcal F(x)} = [K^-1,K]m_{\mathcal F(f(x))}$.
\begin{eqnarray*}
 1 = \frac{m(f(\pi_n^{-1}(B(x,\epsilon)))}{m(\pi_n^{-1}(B(x,\epsilon))} = \frac{m(\pi_{n+1}^{-1}(f(B(x,\epsilon)))}{m(\pi_n^{-1}(B(x,\epsilon))}
\end{eqnarray*}

Let $\mathcal T$ be a given smooth transversal on a neighborhood of $\mathcal F^c(x)$ with angle greater than $\theta$ to $\mathcal F^c(x)$, we may take $\mathcal T$ such that $f(\mathcal T)$ still have angle greater than $\theta$ to $\mathcal F^c(f(x))$.
Since $\mathcal F^c_f$ is $C^1$ and transversely absolutely continuous with bounded Jacobians $Vol(\pi_{n+1}^{-1}(f(B(x,\epsilon)))$ and $Vol(\pi_n^{-1}(B(x,\epsilon))$ is calculated as base times height, since $f$ if a lift of a diffeomomorphism on a compact manifold it has bounded Jacobian, hence the base are comparable. The 
$$1 = [K_0^1,K_0]\frac{f_* m_{x,k}}{f_* m_{f(x),k+1}}.$$

The argument is analogous to obtain the relation for $f^n$.
\end{proof}

\begin{lemma}\label{lemma:growth.rate} There  exist $K_1 \in \mathbb R_+$ such that
 $$ K_1^{-1} \leq \liminf_{n}\frac{\lambda^n}{l(\mathcal F[0,q_n])} \leq \limsup_{n}\frac{\lambda^n}{l(\mathcal F[0,q_n])} \leq K_1$$ 
\end{lemma}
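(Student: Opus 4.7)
The plan is to compare $l(\mathcal F^c[0, q_n]) =: l_n(x)$ with the $A$-center distance between the semi-conjugacy images $H(0(x))$ and $H(q_n(x))$, exploiting that the lifted semi-conjugacy $H$ sits at uniformly bounded distance $D$ from the identity while $A$ expands $E^c_A$ by exactly $\lambda$. First I would fix the lift $H:\mathbb R^3\to\mathbb R^3$ so that $H\circ f=A\circ H$, $\|H-\mathrm{id}\|_\infty\le D$, and $H(0)=0$. Because $H$ sends the $f$-invariant foliations to their $A$-counterparts, $H(B)=B^A:=E^s_A\oplus E^u_A$ and $H(\mathcal F^c_f(x))\subseteq\mathcal F^c_A(H(x))$; in particular $H(0(x))=0^A(H(x))$.

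Next I would track $H(q_n(x))$. Writing $q_n(x)=f^n(q_0(f^{-n}(x)))$ and applying the intertwining, $H(q_n(x))=A^n\bigl(H(q_0(f^{-n}(x)))\bigr)$. The vector $v:=H(q_0(y))-H(0(y))$ lies in $E^c_A$, since both endpoints sit on the same affine $A$-center line, and its Euclidean length $|v|$ is of order $L$: at most $L+2D$ by the triangle inequality, and bounded below by a positive multiple of $L$ thanks to the uniform quasi-isometry of $f$-center leaves in the universal cover (a structural property of DA diffeomorphisms on $\mathbb T^3$, cf.\ \cite{andy.hammerlindl-thesis.article}) combined with the hypothesis that $L$ is much larger than $D$. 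Since $A^n v=\lambda^n v$ on $E^c_A$, this yields $d_{\mathrm{Euc}}(H(0(x)),H(q_n(x)))=\lambda^n|v|$, which is of order $\lambda^n$.

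The $\limsup$ inequality then follows from arc length $\ge$ chord length and the triangle inequality against $H$: $l_n(x)\ge d_{\mathrm{Euc}}(0(x),q_n(x))\ge d_{\mathrm{Euc}}(H(0(x)),H(q_n(x)))-2D$, which is of order $\lambda^n$. For the matching $\liminf$ inequality---equivalently, the upper bound $l_n(x)\le K_1\lambda^n$---I would again invoke the quasi-isometry of $f$-center leaves: with uniform constants $Q,C$ one has $l_n(x)\le Q\,d_{\mathrm{Euc}}(0(x),q_n(x))+C\le Q(\lambda^n|v|+2D)+C$, which is again of order $\lambda^n$. Choosing $K_1$ as the maximum of these two constants completes the proof.

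The principal obstacle is precisely this upper bound on $l_n(x)$: a priori an $f$-center leaf could wind so much that its arc length dwarfs the Euclidean diameter of the segment $[0(x),q_n(x)]$, in which case the comparison with $\lambda^n$ would fail. The quasi-isometric rigidity of DA center leaves in the universal cover is what rules this out, and is where the DA topology on $\mathbb T^3$ plays a crucial role; the $C^1$ regularity of $\mathcal F^c_f$ and of the bounding surfaces $B$ and $B_n$ further ensures that the endpoints $0(x),q_n(x)$ depend continuously on $x$ so that the quasi-isometric estimate applies uniformly to the segments in question.
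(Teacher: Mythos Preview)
Your argument is correct and follows essentially the same strategy as the paper: both proofs combine the intertwining relation $H\circ f=A\circ H$, the uniform bound $\|H-\mathrm{id}\|_\infty\le D$, and Hammerlindl's quasi-isometry of the invariant foliations to compare $l(\mathcal F[0,q_n])$ with $\lambda^n$. The paper fixes a pair $(x,y)\in B\times B_0$ and iterates forward, whereas you fix $x$ and pull back to $y=f^{-n}(x)$; these are the same computation read in opposite directions, and your version is in fact a cleaner write-up of the center case, which the paper only sketches by analogy with the unstable case.

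One small remark: the claim $H(B)=B^A:=E^s_A\oplus E^u_A$ (and hence $H(0(x))=0^A(H(x))$) is not needed and is not obviously true, since the semi-conjugacy only sends $\mathcal F^u_f$ into the two-plane $\mathcal F^{wu+uu}_A$ rather than into $\mathcal F^{uu}_A$. Your argument never actually uses it: what you use is $f$-invariance of $B$, which gives $0(x)=f^n(0(y))$ and hence $H(0(x))=A^nH(0(y))$ directly. You can simply drop that sentence. Also note that the paper treats the case $*=u$ separately (introducing an auxiliary point $z$ to split off the weak-unstable component of $H(y)-H(x)$); your write-up covers only $*=c$, which is consistent with the paper's standing convention but would need the extra step for the unstable case.
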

\begin{proof}
 Recall that we suppose the splitting of $A$ to have two unstable direction $T\mathbb T^3 = E^s \oplus E^{wu} \oplus E^{uu}$. Also $H$ sends center leaves to center leaves.
 
 \begin{itemize}

 \item Case $\mathcal F^* = \mathcal F^u$

 We claim that $H(\mathcal F^{u}_f) \subset \mathcal F^{wu + uu}_A$. Let $x, y \in \mathcal F^u_f(y)$, since $A^{-1}H=Hf^{-1}$ we get
 \begin{eqnarray*}
||A^{-n}(H(x)) - A^{-n}(H(y))|| &=& || Hf^{-n}(x) - Hf^{-n}(y)||\\ 
&\leq& ||Hf^{-n}(x) - f^{-n}(x)|| + || f^{-n}(x) - f^{-n}(y)|| \\
& & + \;||f^{-n}(y) - Hf^{-n}(y)||
 \end{eqnarray*}

 The semi-conjugacy is uniformly close to the identity, hence the three terms above are bounded, hence $H(x)$ and $H(y)$ cannot have a stable component. The semi-conjugacy $H:\mathbb R^3 \rightarrow \mathbb R^3$ satisfies $H\circ f=A\circ H$, and is from a distance $C$ to the identity. Let $x \in B$, $y \in B_0 \cap \mathcal F^u_f(x)$ and $z \in \mathcal F^c_f(x)$ for which $H(z) \in \mathcal F^{uu}_A(y)$, then
 
 \begin{eqnarray*}
 1 &=& \frac{||A^nH(x)-A^nH(y)||}{||H\circ f^n(x) - H\circ f^n(y)||}\\
 &\geq&  \frac{||A^nH(z)- A^nH(y)||-||A^nH(x)- A^nH(z)|| }{||H\circ f^n(x) - f^n(x) || + || f^n(x) - f^n(y) || + ||f^n(y) - H\circ f^n(y)||}\\
 &\geq& \frac{||A^nH(z)- A^nH(y)||-||A^nH(x)- A^nH(z)||}{2C + ||f^n(x) - f^n(y)||}\\
 &\geq& \frac{(\lambda^{uu}_A)^n ||H(z) - H(y)|| - (\lambda^{wu}_A)^n|| H(x)-H(z)||}{2C + l(\mathcal F[0,q_n])}.
 \end{eqnarray*}
  
 We then get,
 $$\liminf_{n}\frac{\lambda^n|| H(z) - H(y)||}{l(\mathcal F[0,q_n])} \geq 1.$$
 
 Above we used the quasi-isometry of the invariant foliations \cite{andy.hammerlindl-thesis.article}, for instance to say that $||f^n(x) - f^n(y)||$ grows as in the unstable leaf.
 
 Recall that the invariant foliations of $A$ are straight lines. This implies that $|| H(z) - H(y)||$ is uniformly bounded. That comes from euclidian geometry. Consider the triangle formed by the vertices $H(x),H(y),H(z)$ side between the vertices $H(x), H(y)$ is uniformly bounded and the angles of the triangle are unimorly bounded the side formed by the vertices $H(y),H(z)$ is uniformely bounded. Hence
 
  $$\liminf_{n}\frac{\lambda^n}{l(\mathcal F[0,q_n])} \geq K_1.$$

 Similar for the other inequality.
 
 \item Case $\mathcal F^* = \mathcal F^s$ or $\mathcal F^c$.
 
 Since in this case the semi-conjugacy sends stable leaves and center leaves of $f$ to stable leaves and center leaves of $A$ respectively the calculation is similar to the above, except we do not have to consider the point $z$.
 \end{itemize}

\end{proof}

From the above two lemmas we have that $m_{x,i_n}$ is equal to $m_{\mathcal F(x)}$ up to a multiplication by a scalar, this scalar is uniformly bounded. Hence given $x \in \mathbb R^3$ consider a convergent subsequence $a_{x,i_n}\rightarrow a_x$ such that 
 \begin{eqnarray}\label{eq:m_x,n.convergence}
  \lim_{n \rightarrow \infty} m_{x,i_n} = a_x m_{\mathcal F(x)}.
 \end{eqnarray}

We now want to define these measures $m_x$ for every $x \in \mathbb R^3$, but we still want it to keep the property 
\begin{eqnarray}\label{eq:m_x.relation}
 f^n_* m_{x}(.) = \lambda^{-n} [K^{-1},K] m_{f(x)}(.)
\end{eqnarray} 
as in Lemma \ref{lemma:conditional.step.k}.

Given $x$ we let $a_x$ and $i_n(x)$ as in (\ref{eq:m_x,n.convergence}). We now define  $i_n(f(x)) = i_n +1$ for $f(x)$ and because we have convergence of $m_{x,i_n(x)}$ we get convergence of $m_{f(x),i(f(x))}$ to a measure $m_{f(x)}$. And by Lemma \ref{lemma:conditional.step.k} we have (\ref{eq:m_x.relation}).

 

 We now calculate the Lyapunov exponent. By the above we have 
$$\frac{df_*^n m_x}{dm_{f^n(x)}}(f^n(x)) \in \lambda^{-n}[K_1^{-1},K_1].$$ 
Let us calculate the Radon-Nikodym derivative by another way. Let $I_\delta^n \subset \mathcal F^c_{f^n(x)}$ be a segment of length $\delta$ around $f^n(x)$. Then 
$$ \frac{df_*^n m_x}{dm_{f^n(x)}}(f^n(x))= \lim_{\delta \rightarrow 0} \frac{f_*^n m_x(I^n_\delta)}{m_{f^n(x)}(I^n_\delta)}.$$
And using Lemma \ref{lemma:growth.rate} that states that the densities of $m_x$ are uniformely bounded.
\begin{eqnarray*}
\frac{df_*^n m_x}{dm_{f^n(x)}}(f^n(x)) 
&=& \lim_{\delta \rightarrow 0} \frac{m_x(f^{-n}(I^n_\delta))}{m_{f^n(x)}(I_\delta^n) }
=\lim_{\delta \rightarrow 0}\frac{\int_{f^{-n}(I^n_\delta)}  \rho_x d m_{\mathcal F(x)}}{\int_{I^n_\delta} \rho_{f^n(x)} d m_{\mathcal F(f^n(x))} } \\
&\leq& K_1^2 \lim_{\delta \rightarrow 0}\frac{\int_{f^{-n}(I^n_\delta)}  dm_{\mathcal F(x)}}{\int_{I^n_\delta} dm_{\mathcal F(f^n(x))} }
\leq K_1^2 \lim_{\delta \rightarrow 0}\frac{  m_{\mathcal F(x)}(f^{-n}(I^n_\delta)) }{ m_{\mathcal F(f^n(x))}(I^n_\delta) }\\
&\leq& K_1^2 || Df^{-n}(x)||.
\end{eqnarray*}

Therefore

$$ [K_1^{-1},K_1]\lambda^{-n} = \frac{df_*^n m_x}{dm_{f^n(x)}}(f^n(x))  \leq K_1^2 || Df^{-n}(x)||$$
Now apply "$lim_{n \rightarrow \infty} 1/n \;log$"  above to get

$$\lambda^*(x) \leq log \;\lambda.$$

Similarly $\lambda^*(x) \geq log \; \lambda$.

That is the the Lyapunov exponent in the direction $*$ is defined everywhere and equals to $log \; \lambda$.

\hfill $\Box$

\vspace{0,3cm}
\textit{Acknowledgements.} I am greatful for useful conversations with Prof. Ali Tahzibi, Prof. Marcelo Viana and Prof. Amie Wilkinson. As well as FAPESP for financial support (grants \# 2011/21214-3  and  \# 2012/06553-9).

Department of Mathematics, University of Chicago, USA. \textbf{\&}  

Departamento de Matem\'atica, ICMC-USP S\~{a}o Carlos-SP, Brazil 

\textit{Email address:} \textbf{regisvarao@icmc.usp.br}

\end{document}